\documentclass[11 pt]{amsart}
\usepackage{amssymb,amsmath,epsfig,mathrsfs, enumerate}

\usepackage{amssymb,amsmath,epsfig,graphics,mathrsfs}
\usepackage{amssymb,amsmath,epsfig,mathrsfs, enumerate, xparse, mathtools}
\usepackage[pagebackref,colorlinks=true,linkcolor=blue,citecolor=blue]{hyperref}

\usepackage{graphicx}
\usepackage[normalem]{ulem}
\usepackage{fancyhdr}
\usepackage[margin=3cm]{geometry}
\hypersetup{
 colorlinks   = true,
 urlcolor     = blue,
 linkcolor    = blue,
 citecolor   = red ,
 bookmarksopen=true
}

\theoremstyle{plain}
\newtheorem{thrm}{Theorem}[section]
\newtheorem{lemma}[thrm]{Lemma}

\newtheorem{dfn}[thrm]{Definition}

\newtheorem*{hyp}{Hypothesis 2.1}
\usepackage{amsmath}
\usepackage{amsfonts}
\usepackage{amssymb}
\usepackage{amsthm}
\usepackage{epsfig,graphics,mathrsfs}
\usepackage{graphicx}

\usepackage{latexsym} 
\usepackage{amssymb,amsmath}
\usepackage{amsthm}
\usepackage{longtable,booktabs,setspace} 
\usepackage{url}

\usepackage[usenames, dvipsnames]{color}

\begin{document}
% begin top matter
% ***************** macroes needed for this paper ************************

\newcommand{\SL}{\mathcal L^{1,p}( D)}
\newcommand{\Lp}{L^p( Dega)}
\newcommand{\CO}{C^\infty_0( \Omega)}
\newcommand{\Rn}{\mathbb R^n}
\newcommand{\Rm}{\mathbb R^m}
\newcommand{\R}{\mathbb R}
\newcommand{\Om}{\Omega}
\newcommand{\Hn}{\mathbb H^n}
\newcommand{\aB}{\alpha B}
\newcommand{\eps}{\ve}
\newcommand{\BVX}{BV_X(\Omega)}
\newcommand{\p}{\partial}
\newcommand{\IO}{\int_\Omega}
\newcommand{\bG}{\boldsymbol{G}}
\newcommand{\bg}{\mathfrak g}
\newcommand{\bz}{\mathfrak z}
\newcommand{\bv}{\mathfrak v}
\newcommand{\Bux}{\mbox{Box}}
\newcommand{\e}{\ve}
\newcommand{\X}{\mathcal X}
\newcommand{\Y}{\mathcal Y}
\newcommand{\W}{\mathcal W}
\newcommand{\la}{\lambda}
\newcommand{\vf}{\varphi}
\newcommand{\rhh}{|\nabla_H \rho|}
\newcommand{\Ba}{\mathscr{B}_\alpha}
\newcommand{\Za}{Z_\beta}
\newcommand{\ra}{\rho_\beta}
\newcommand{\na}{\nabla_\beta}
\newcommand{\vt}{\vartheta}

\numberwithin{equation}{section}

\newcommand{\RN} {\mathbb{R}^N}
\newcommand{\Sob}{S^{1,p}(\Omega)}
\newcommand{\Dxk}{\frac{\partial}{\partial x_k}}
\newcommand{\Co}{C^\infty_0(\Omega)}
\newcommand{\Je}{J_\ve}
\newcommand{\beq}{\begin{equation}}
\newcommand{\bea}[1]{\begin{array}{#1} }
\newcommand{\eeq}{ \end{equation}}
\newcommand{\ea}{ \end{array}}
\newcommand{\eh}{\ve h}
\newcommand{\Dxi}{\frac{\partial}{\partial x_{i}}}
\newcommand{\Dyi}{\frac{\partial}{\partial y_{i}}}
\newcommand{\Dt}{\frac{\partial}{\partial t}}
\newcommand{\aBa}{(\alpha+1)B}
\newcommand{\GF}{\psi^{1+\frac{1}{2\alpha}}}
\newcommand{\GS}{\psi^{\frac12}}
\newcommand{\HFF}{\frac{\psi}{\rho}}
\newcommand{\HSS}{\frac{\psi}{\rho}}
\newcommand{\HFS}{\rho\psi^{\frac12-\frac{1}{2\alpha}}}
\newcommand{\HSF}{\frac{\psi^{\frac32+\frac{1}{2\alpha}}}{\rho}}
\newcommand{\AF}{\rho}
\newcommand{\AR}{\rho{\psi}^{\frac{1}{2}+\frac{1}{2\alpha}}}
\newcommand{\PF}{\alpha\frac{\psi}{|x|}}
\newcommand{\PS}{\alpha\frac{\psi}{\rho}}
\newcommand{\ds}{\displaystyle}
\newcommand{\Zt}{{\mathcal Z}^{t}}
\newcommand{\XPSI}{2\alpha\psi \begin{pmatrix} \frac{x}{|x|^2}\\ 0 \end{pmatrix} - 2\alpha\frac{{\psi}^2}{\rho^2}\begin{pmatrix} x \\ (\alpha +1)|x|^{-\alpha}y \end{pmatrix}}
\newcommand{\Z}{ \begin{pmatrix} x \\ (\alpha + 1)|x|^{-\alpha}y \end{pmatrix} }
\newcommand{\ZZ}{ \begin{pmatrix} xx^{t} & (\alpha + 1)|x|^{-\alpha}x y^{t}\\
     (\alpha + 1)|x|^{-\alpha}x^{t} y &   (\alpha + 1)^2  |x|^{-2\alpha}yy^{t}\end{pmatrix}}
\newcommand{\norm}[1]{\lVert#1 \rVert}
\newcommand{\ve}{\varepsilon}
\newcommand{\sa}{\langle}
\newcommand{\da}{\rangle}

\title[]{A strong unique continuation result for the Baouendi operator}

\author{Agnid Banerjee}
\address{School of Mathematical and Statistical Sciences\\ Arizona State University}\email[Agnid Banerjee]{agnid.banerjee@asu.edu}

\author{Nicola Garofalo}
\address{School of Mathematical and Statistical Sciences\\ Arizona State University}\email[Nicola Garofalo]{nicola.garofalo@asu.edu}

%
% 
% AMS information
%
\keywords{}
\subjclass{}

\maketitle

%\tableofcontents

\begin{abstract}
We establish a strong unique continuation property for the subelliptic Baouendi operator  under the presence of zero-order perturbations satisfying an almost Hardy-type growth condition. In particular, the admissible class includes both $L^\infty_{\mathrm{loc}}$ and singular potentials. We prove that any solution vanishing to infinite order at a point of the degeneracy manifold of the operator must be identically zero. The result holds extends to variable-coefficient operators with intrinsic Lipschitz regularity. A notable feature of the proof is that it relies exclusively on $L^2$ Carleman estimates combined with the classical Hardy inequality.
\end{abstract}

\section{Introduction}\label{S:intro}

In this note we prove a strong unique continuation property (SUCP) for zero-order perturbations of the following degenerate operator 
\begin{equation}\label{pbeta0}
\Ba u= \Delta_z u + |z|^{2\alpha} \Delta_t u,
\end{equation}
where $m,k\in\mathbb N$, $z\in\mathbb R^m$, $t\in\mathbb R^k$ and $\alpha>0$. This operator arises in several contexts in analysis, mathematical physics and geometry (for an extensive discussion we refer the reader to  \cite[sec. 2]{DGGS}).   

Since $\Ba$ is uniformly elliptic away from the degeneracy manifold $M = \{0\}_z\times \R^k$, classical unique continuation results apply there (see, for instance, \cite{AKS}). Consequently, we restrict our attention to the propagation of zeros from points on $M$. By translation invariance, it suffices to consider the origin $(0,0)\in\mathbb R^{m+k}$.

With $\rho_\alpha$ indicating the intrinsic gauge \eqref{rho} and $B_R=\{(z,t)\in\mathbb R^{m+k}\mid \rho_\alpha(z,t)<R\}$, we consider a solution in $B_R$ to the equation 
\begin{equation}\label{main}
\Ba u =  V(z,t)\ u,
\end{equation}
where on $V$ we assume that, for some fixed $R>0$, there exist $C_0>1$ and $\delta \in (0, 2]$ such that one has for $(z,t)\in B_R$:
\begin{equation}\label{vassump}
|V(z,t)|\leq \frac{C_0}{\rho_\alpha(z,t)^{2-\delta}}.
\end{equation}
We remark explicitly that, when  $\delta = 2$, the assumption \eqref{vassump} is equivalent to $V\in L^\infty(B_R)$.

In this connection, and to put our results in perspective, we note that when $\alpha = 1$ the equation \eqref{main} reads
\begin{equation}\label{mainb}
\Delta_z u + |z|^2 \Delta_t u = V(z,t) u.
\end{equation}
The left-hand side in \eqref{mainb} is the well-known operator of Baouendi \cite{Ba} which, when $m = 2n$, is closely connected to the horizontal Laplacian $\Delta_H$ on the Heisenberg group $\mathbb H^n$, or more in general on a group of Heisenberg type with center $\cong \R^k$. An important negative result of H. Bahouri \cite{Bah} showed that the SUCP, and even the weaker UCP, badly fail in $\mathbb H^n$ for $\Delta_H u = V(z,t) u$ (for some positive results,  
 see \cite{GR} and the references therein). 
 
 As a consequence, even for $V\in L^\infty_{loc}$ the SUCP for \eqref{mainb} is all but obvious. In this respect the existing literature relevant to the present note consists of the following works:

\begin{itemize}
\item In \cite{GarShen}, Shen and the second author treated \eqref{mainb} in the case
$m\ge2$, $k=1$. Using a delicate two-weight $L^p$--$L^q$ Carleman estimate, they proved SUCP for $V\in L^p_{\mathrm{loc}}$, $p>p(m)$, including for the first time the case $V\in L^\infty_{\mathrm{loc}}$.
\item In the recent work \cite{DL}, De Bie and Lan significantly extended the results of \cite{GarShen}, allowing any $\alpha\in\mathbb N$ in $\Ba$. They can treat potentials in $L^\infty_{\mathrm{loc}}(\mathbb R^{m+k})$, but their results do not cover the full range $0<\delta\le2$ in \eqref{vassump}.
\end{itemize}

For an open set $\Om\subset \R^{m+k}$, we indicate with $S^{\ell,p}(\Om)$ the natural $L^p$ Sobolev space of order $\ell$ associated with $\Ba$, see Section \ref{S:carl}. We prove the following result.

\begin{thrm}\label{main1}
Assume \eqref{vassump}, $m\ge3$ and $0<\alpha\le1$. Let $u \in S^{2,2}(B_R)$ solve \eqref{main} in $B_R$. If $u$ vanishes to infinite order at $(0,0)$ in the sense of Definition \ref{v0}, then $u \equiv 0$ in $B_R$. 
\end{thrm}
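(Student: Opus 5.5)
The plan is to derive Theorem \ref{main1} from a weighted $L^2$ Carleman estimate for $\Ba$ with weight a power of the gauge $\rho_\alpha$, the Hardy inequality, and a standard cutoff argument.

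\textbf{Carleman estimate.} I would first establish, for $w\in C^\infty_0(B_R\setminus\{(0,0)\})$ and all sufficiently large $\beta$ (taken along a suitable sequence avoiding the ``eigenvalues'' of the spherical part of $\Ba$), an estimate of the form
\begin{equation*}
\beta^{2}\int \rho_\alpha^{-2\beta-4}\,\psi\, w^2 + \beta\int \rho_\alpha^{-2\beta-2}\,|\nabla_\alpha w|^2 \le C\int \rho_\alpha^{-2\beta}\,\psi^{-1}\,(\Ba w)^2 .
\end{equation*}
Here $\psi=|\nabla_\alpha\rho_\alpha|^2=|z|^{2\alpha}\rho_\alpha^{-2\alpha}$ is the angular factor, and $\nabla_\alpha=(\nabla_z,|z|^\alpha\nabla_t)$ is the intrinsic gradient. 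The proof is the usual one. I would pass to ``polar'' coordinates adapted to the dilations $(z,t)\mapsto(\lambda z,\lambda^{\alpha+1}t)$, conjugate $\Ba$ by $\rho_\alpha^{-\beta}$, and split the conjugated operator into symmetric and antisymmetric parts. The generator of dilations $Z$ satisfies $Z\rho_\alpha=\rho_\alpha$ and $\langle\nabla_\alpha\rho_\alpha,\nabla_\alpha w\rangle=\psi\rho_\alpha^{-1}Zw$, so the commutator term is positive modulo terms that can be absorbed.

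\textbf{The potential.} The weight $\psi$ degenerates on $M$, so the zero-order term $Vw$ cannot be absorbed directly into the left-hand side of the estimate, since
\begin{equation*}
\int\rho_\alpha^{-2\beta}\psi^{-1}V^2w^2 \le C_0^2\int \rho_\alpha^{-2\beta-4+2\delta}\,\psi^{-1}\,w^2 .
\end{equation*}
Here I would use the classical Hardy inequality in the $z$ variable, which requires $m\ge3$:
\begin{equation*}
\int_{\mathbb R^m}\frac{f^2}{|z|^2}\,dz\le \frac{4}{(m-2)^2}\int_{\mathbb R^m}|\nabla_z f|^2\,dz .
\end{equation*}
I would apply it to $f=\rho_\alpha^{-\beta-1+\delta}w$. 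Since $0<\alpha\le1$, we have $\psi^{-1}\rho_\alpha^{-2}=|z|^{-2\alpha}\rho_\alpha^{2\alpha-2}\le |z|^{-2}$ near the origin. This bounds the potential term by $C\beta^2\int\rho_\alpha^{-2\beta-2+2\delta}|\nabla_\alpha w|^2$ plus lower-order terms.

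This still carries the factor $\beta^2$ against only $\beta$ on the left, so I would refine the Carleman estimate so that the gradient term comes with the better power. One way is to use a weight $\rho_\alpha^{-\beta}e^{\beta\rho_\alpha^{\epsilon}}$-type correction, which yields an extra factor $\beta^2$ on terms weighted by $\rho_\alpha^{\epsilon}$. Alternatively, I would localize in $B_r$ with $r$ small, so that the gain $\rho_\alpha^{2\delta}\le r^{2\delta}$ absorbs the constants. I expect this balancing to be the main obstacle: the degeneracy of $\psi$ versus the Hardy gain, together with the need for $\alpha\le1$ and $m\ge3$, is exactly where the hypotheses enter.

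\textbf{Conclusion.} Finally, I would take $w=\eta u$ with $\eta$ a cutoff that equals $1$ on $B_{r/2}$ and vanishes outside $B_r$. The estimate extends from smooth compactly supported functions to $S^{2,2}$ functions vanishing to infinite order at the origin (Definition \ref{v0}), by approximation with cutoffs near $(0,0)$ whose error terms vanish thanks to infinite order vanishing. Absorbing the $Vu$ term as above leaves
\begin{equation*}
\beta^2(r/4)^{-2\beta-4}\int_{B_{r/4}}\psi u^2\le C (r/2)^{-2\beta}\int_{B_r\setminus B_{r/2}}(\ldots),
\end{equation*}
where the right-hand side comes from the commutator terms of $\eta$. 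Letting $\beta\to\infty$ gives $u\equiv0$ in $B_{r/4}$. Since $\Ba$ is elliptic off $M$ and translation invariant in $t$, a standard connectedness argument, combining the result at points of $M$ with weak unique continuation away from $M$, then gives $u\equiv0$ in $B_R$.
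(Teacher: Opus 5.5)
You have the right ingredients: the $\psi$/$\psi^{-1}$-weighted Carleman estimate, Hardy in $z$ applied to $\rho^{-\beta-1+\delta}w$, and $\psi^{-1}\le\rho^2/|z|^2$ for $\alpha\le1$, as in \eqref{ps1}. The gap is that the decisive step, absorbing the potential term, is left open. The obstacle you describe comes from a miscount, not from the problem. Differentiating the weight in $f=\rho^{-\beta-1+\delta}w$ gives
\[
|\nabla_z f|^2\le 2\rho^{-2\beta-2+2\delta}|\nabla_z w|^2+2(\beta+1)^2\rho^{-2\beta-4+2\delta}\,|\nabla_z\rho|^2\,w^2 .
\]
Since $|\nabla_z w|\le |Xw|$ and $|\nabla_z\rho|^2\le |X\rho|^2=\psi$, the potential term on $B_r$ is at most
\[
C\,r^{2\delta}\Big(\int\rho^{-2\beta-2}|Xw|^2+\beta^2\int\rho^{-2\beta-4}\psi\,w^2\Big).
\]
So the $\beta^2$ lands on a zero-order term carrying exactly the weight $\psi$ of the coercive term, and the gradient term carries no power of $\beta$. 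Both are absorbed into the left side of an estimate of the form \eqref{est1} by taking $r$ small, independently of $\beta$. This is the paper's argument around \eqref{j1}, and it is the heart of the proof. Your bound $C\beta^2\int\rho^{-2\beta-2+2\delta}|Xw|^2$ is true but throws away $\beta^2$, and neither remedy recovers it:
\begin{itemize}
\item for fixed $r$, the factor $r^{2\delta}$ cannot compensate a mismatch that grows with $\beta$;
\item the exponential correction of Theorem \ref{thm3} gains only one power of $\gamma$ in each term (compare \eqref{est12} with \eqref{est1}), so $\gamma^2$ against $\gamma$ persists.
\end{itemize}

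Second, the Carleman estimate you plan to prove, with a pure power weight and the factor $\beta$ on the gradient term, is false even along a sequence of $\beta$'s, because $\log\rho$ is only a limiting weight. Here is a test family.
\begin{itemize}
\item Take a harmonic polynomial $P(z)$ of degree $k$ nearest to $\beta-\frac{Q-4}{2}$, so that $\beta=k+d+\frac{Q-4}{2}$ with $|d|\le\frac12$.
\item Let $w=\rho^{d}\chi(\log\rho)P$, with $\chi$ a long, slowly varying bump.
\item By \eqref{ii}, \eqref{h10} and $ZP=kP$, with $s=\log\rho$ one gets $\Ba w=\psi P\rho^{d-2}\big[(\partial_s+d)(\partial_s+d+Q-2+2k)\chi\big](s)$.
\item In polar coordinates $dz\,dt=r^{Q-1}dr\,d\sigma$, the right-hand side is about $4d^2k^2c\int\chi^2\,ds$, where $c=\int_{S_1}\psi P^2\,d\sigma$.
\item Using $\int_{S_1}|XP|^2\,d\sigma=k(2k+Q-2)c$, one gets $\beta\int\rho^{-2\beta-2}|Xw|^2\approx 2\beta k^2c\int\chi^2\,ds$.
\end{itemize}
So the constant would have to grow at least like $\beta$. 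What is true, and all you need once the Hardy step is done correctly, is \eqref{est1}: coefficient $1$ on the gradient and $\gamma^2$ on the $\psi$-weighted zero-order term. With that estimate, the rest of your outline matches the paper, with two conditions. The cutoff $\eta$ must be a function of $\rho$, so that $|X\eta|^2\lesssim r^{-2}\psi$ and $(\Ba\eta)^2\lesssim r^{-4}\psi^2$ cancel the $\psi^{-1}$ in the error terms. The extension to $w=\eta u$ must use Lemma \ref{L:vanish}.
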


We also have an interesting generalization of this result to the following class of \emph{variable coefficient} operators  
\begin{equation}\label{La}
\mathscr{L}_\alpha= \sum_{i, j=1}^{m+k}X_i(a_{ij}(z,t) X_j).
\end{equation}
In \eqref{La} the vector fields $\{X_1,...,X_{m+k}\}$ are those defining \eqref{pbeta0} (see \eqref{df} and \eqref{pbeta}), and the coefficients $[a_{ij}(z,t)]$ satisfy the best possible intrinsic ``Lipschitz" conditions introduced in \cite{GV}.

\begin{thrm}\label{T:La}
Assume \eqref{vassump}, $m\ge3$ and $0<\alpha\le1$. Let $u \in S^{2,2}(B_R)$ solve
\begin{equation}\label{Lau}
\mathscr L_\alpha u = V(z,t) u
\end{equation}
in $B_R$, where the coefficient matrix $[a_{ij}]$ satisfies the Hypothesis 2.1 below. If $u$ vanishes to infinite order at $(0,0)$ in the sense of Definition \eqref{v0}, then $u \equiv 0$ in $B_R$. 
\end{thrm}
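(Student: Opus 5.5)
Theorem \ref{T:La} will be proved by the same scheme as Theorem \ref{main1}: an $L^2$ Carleman estimate with singular weights in the gauge $\rho_\alpha$, with the new difficulty that the weight must be adapted to the matrix $[a_{ij}]$. The variable-coefficient errors must then be absorbed using the intrinsic Lipschitz Hypothesis 2.1.

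\emph{Step 1: normalization and adapted gauge.} After a linear change of variables preserving the structure of the vector fields $X_i$, I will assume $a_{ij}(0,0)=\delta_{ij}$. Following \cite{GV}, I introduce the quantity
\[
\mu=\sum_{i,j} a_{ij}\,\frac{X_i\rho_\alpha\, X_j\rho_\alpha}{|X\rho_\alpha|^2},
\]
which equals $1$ for $\Ba$, and the radial field $Z$. Hypothesis 2.1 should give $|a_{ij}-\delta_{ij}|\le C\rho_\alpha$ and $|X_k a_{ij}|\le C$. Consequently $\mu=1+O(\rho_\alpha)$, and $\mathscr L_\alpha\rho_\alpha^{\,\beta}$ differs from its constant-coefficient counterpart by terms of relative size $O(\rho_\alpha)$.

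\emph{Step 2: Carleman estimate.} For $w\in C_0^\infty(B_{r_0}\setminus\{0\})$ I will prove, for $\tau$ large and outside a discrete exceptional set,
\[
\tau^3\int \rho_\alpha^{-2\tau-4}e^{\tau\rho_\alpha^{\varepsilon}}\psi\, w^2
+\tau\int \rho_\alpha^{-2\tau-2}e^{\tau\rho_\alpha^{\varepsilon}}|Xw|^2
\le C\int \rho_\alpha^{-2\tau}e^{\tau\rho_\alpha^{\varepsilon}}\psi^{-1}(\mathscr L_\alpha w)^2 ,
\]
where $\psi=|X\rho_\alpha|^2$ is the angular factor. The extra factor $e^{\tau\rho_\alpha^\varepsilon}$ supplies the convexity needed to absorb the $O(\rho_\alpha)$ error terms from Step 1. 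I will proceed by the usual method: conjugate, split the conjugated operator into symmetric and antisymmetric parts, and compute the commutator via the Rellich identity for $\mathscr L_\alpha$. This yields a positive term of order $\tau\rho_\alpha^{\varepsilon-1}$ times the leading term, which dominates the $O(\tau)$ errors since $\rho_\alpha^{\varepsilon-1}\gg1$. This step is the main obstacle, because the Rellich-type identity with variable coefficients produces terms like $(X a_{ij})\,Xw\,Z w$ that must be bounded using only Hypothesis 2.1.

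\emph{Step 3: potential and cutoff.} The potential term is
\[
\int \rho_\alpha^{-2\tau}e^{\tau\rho_\alpha^\varepsilon}\psi^{-1}V^2w^2\le C\int\rho_\alpha^{-2\tau-4+2\delta}\psi^{-1}w^2.
\]
Where $\psi$ is small, i.e.\ near $z=0$, the factor $\psi^{-1}$ is controlled via $\psi\sim |z|^{2\alpha}/\rho_\alpha^{2\alpha}$ together with the classical Hardy inequality in $z\in\R^m$. This is where $m\ge3$ enters: $\int |z|^{-2}f^2\le \frac{4}{(m-2)^2}\int|\nabla_z f|^2$. The condition $\alpha\le1$ ensures $|z|^{-2\alpha}\rho_\alpha^{2\alpha-2}\le |z|^{-2}$. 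Combined with the gradient term, and since $\rho_\alpha^{2\delta}\to0$, the potential term is absorbed for $r_0$ small. Finally I apply the estimate to $w=\eta u$ with a cutoff $\eta$ equal to $1$ on $B_{r_0/2}$; the approximation of $u\in S^{2,2}$ near the origin is justified by vanishing to infinite order. This gives $\bigl(r/r_0\bigr)^{2\tau}\!\int_{B_r}u^2\lesssim \int_{B_{r_0}\setminus B_{r_0/2}}u^2$. Letting $\tau\to\infty$ shows $u\equiv0$ near the origin, and the conclusion propagates to $B_R$ via the weak UCP, namely ellipticity off $M$ together with the same estimate centered at points of $M$.
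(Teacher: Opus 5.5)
Your overall route is the paper's. Step 3 (the bound $\psi^{-1}\le\rho^2/|z|^2$ for $\alpha\le1$, Hardy's inequality in $z$ for $m\ge3$, absorption, cutoff, $\tau\to\infty$, weak unique continuation) is exactly how the paper argues. The paper, however, does not prove a Carleman estimate for $\mathscr L_\alpha$; it imports Theorem~\ref{thm3}.

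\textbf{Step 2 is false as stated.} With the weight $\rho^{-2\tau}e^{\tau\rho^\varepsilon}$ the extra convexity is only of size $\tau\rho^{\varepsilon}$: in $s=\log\rho$ the phase $-\tau s+\frac{\tau}{2}e^{\varepsilon s}$ has second derivative $\frac{\tau\varepsilon^2}{2}\rho^{\varepsilon}$. So both coercive terms must carry a factor $\rho^{\varepsilon}$, as in \eqref{est12}. Already for $a_{ij}=\delta_{ij}$ you can check this. Take $w=\chi(\log\rho)P(z)$, with $P$ a harmonic polynomial in $z$ of degree $\kappa$, $|\tau-\kappa-\frac{Q-4}{2}|\le\frac12$, and $\chi$ a fixed bump translated to where $\tau\rho^{\varepsilon}\le1$. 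By \eqref{ii} and \eqref{h10}, $\Ba w=\psi\rho^{-2}P\,(\chi''+(Q-2+2\kappa)\chi')$. Generalized polar coordinates then show that your right-hand side is at most $C\tau^2\int\rho^{-2\tau-4}e^{\tau\rho^{\varepsilon}}\psi w^2$. This contradicts the factor $\tau^3$ on your left for $\tau$ large, with or without an exceptional set.

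\textbf{Your sketch of Step 2 also uses the wrong input.} The unweighted consequences $|a_{ij}-\delta_{ij}|\le C\rho$, $|X_ka_{ij}|\le C$ do not give errors of relative size $O(\rho)$. For instance, with $i\le m<j$ the term $(X_ib_{ij})X_j\rho$ is then only $O(\psi^{1/2})$ instead of $O(\psi)$, and such terms cannot be absorbed by the $\psi$-weighted quantities. The powers of $\psi$ in Hypothesis 2.1 are precisely what makes \eqref{est12} possible.

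\textbf{The fix, and what it requires in Step 3.} Use Theorem~\ref{thm3} as a black box. Step 3 then does not close merely because $\rho^{2\delta}\to0$. After Hardy, the potential produces $\int\rho^{-2\gamma-2+2\delta}e^{2\gamma\rho^{\varepsilon}}|\nabla_z v|^2$ and $\gamma^2\int\rho^{-2\gamma-4+2\delta}e^{2\gamma\rho^{\varepsilon}}|\nabla_z\rho|^2v^2$. The left side of \eqref{est12} only controls $\gamma\int\rho^{-2\gamma-2+\varepsilon}e^{2\gamma\rho^{\varepsilon}}\langle AXv,Xv\rangle$ and $\gamma^3\int\rho^{-2\gamma-4+\varepsilon}e^{2\gamma\rho^{\varepsilon}}\psi v^2$. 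So you must tie $\varepsilon$ to $\delta$. The paper takes $\varepsilon=\delta$; first replace $\delta$ by $\min\{\delta,\frac12\}$, which is harmless since $\rho<1$ and Theorem~\ref{thm3} needs $\varepsilon<1$. Then $\rho^{2\delta}\le R_0^{\delta}\rho^{\delta}$. You must also use $|\nabla_z v|\le|Xv|$ and $|\nabla_z\rho|^2\le|X\rho|^2=\psi$. The latter is what lets the term from differentiating the weight be absorbed by the $\psi$-weighted zero-order term.

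Keep the exponential inside the function to which you apply Hardy; the paper uses $f=\rho^{-\gamma-1+\delta}e^{\gamma\rho^{\delta}}v$. Dropping it, as in your displayed bound for the potential term, costs a factor $e^{\tau r_0^{\varepsilon}}$, which cannot be absorbed as $\tau\to\infty$.

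\textbf{Two minor points.} First, the annular remainder contains $|Xu|^2$ as well as $u^2$. Since $|X\eta|^2\lesssim r_0^{-2}\psi$, the cross term gives $\psi^{-1}\langle AX\eta,Xu\rangle^2\lesssim r_0^{-2}|Xu|^2$. Second, the final propagation needs no Carleman estimate at other points of $M$, and you would not have one, since Hypothesis 2.1 is centered at the origin. It suffices that $B_R\setminus M$ is connected (as $m\ge2$) and the equation is elliptic there.
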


Having stated our main results, we next comment on their proof and compare them to the cited works \cite{GarShen, DL}. Concerning the former aspect, we mention that in our joint paper \cite{BGM} with Manna we proved a $L^2$ Carleman estimate for the operator $\Ba$ in \eqref{pbeta0}, see Theorem \ref{thm2} below. For the ``variable coefficient" operator  in \eqref{La}, such Carleman result was subsequently extended in \cite[Theor. 3.1]{BMa}, see Theorem \ref{thm3}.

We derive Theorems \ref{main1} and \ref{T:La} from a combination of these $L^2$ Carleman estimates with the classical Hardy inequality in $\Rm$. 
This unexpected ``elementary" nature of our approach - the analysis relies exclusively on $L^2$ techniques - represents the essential novelty of our contribution. 

The fact that such an approach is possible at all is already surprising for bounded potentials, and contrasts with the substantially more elaborate machinery employed in the works cited above.
We mention that the limitation on the dimension $m\ge 3$ and on the range $0<\alpha\le 1$ stems exclusively from the Hardy inequality.  

In closing we mention the recent work \cite{AFL} in which the authors developed a delicate refinement of the frequency-function approach in \cite{G} and proved SUCP for potentials $V\in S^{1,\sigma}_{\mathrm{loc}}$ with $\sigma>\frac Q2$. This excludes bounded and singular potentials satisfying \eqref{vassump} for $0<\delta\le1$, although their results do apply to eigenfunctions of $\Ba$.

A brief description of this note is as follows. In Section \ref{pre} we collect the preliminary material which is needed in the proof of Theorems \ref{main1} and \ref{T:La}. In Section \ref{S:proofs} we prove these results.  
%%%%%%%%%%%%%%%%
\vskip 0.2in

\noindent \textbf{Acknowledgement:} We thank Bernard Helffer for his interest in the results presented in this note.

%%%%%%%%%%%%%%%

\vskip 0.2in

%%%%%%%%%%%%%%%%%%%%%%%%%%%%%%%%%%%%%%%%

\section{Preliminary results}\label{pre}

The study of the SUCP for the Baouendi operator \eqref{pbeta0} was initiated by one of us in \cite{G}, where the frequency-function method  in \cite{GLiumj, GLcpam} was developed in the degenerate setting.
These results provided the first examples of subelliptic operators degenerating on a  $k$-dimensional $M$ whose solutions must have a finite order of vanishing at any point of $M$,  see \cite[Theor. 4.2, 4.3 \& Cor. 4.3]{G}. It is worth mentioning that the monotonicity of the frequency in the cited Theor. 4.2 for \eqref{pbeta0} subsequently played a critical role in the work of Caffarelli, Salsa \& Silvestre on the obstacle problem for $(-\Delta)^s$, see \cite[Theor. 3.1 \& Remark 3.2]{CSS}.

The operator \eqref{pbeta0} has two significant features:
\begin{itemize}
\item[(i)] it is invariant with respect to standard translations $(z,t)\to (z,t +t')$ along the manifold of degeneracy $M$. 
\item[(ii)] it is invariant with respect to the following family of anisotropic dilations 
\begin{equation}\label{dil}
\delta_\la(z,t)=(\la z,\la^{\alpha+1} t),\ \ \ \ \ \ \ \ \la>0,
\end{equation}
in the sense that 
\[
\Ba(f\circ \delta_\la) = \la^2  (\Ba f)\circ \delta_\la.
\]
\end{itemize}

A  function $v$ is $\delta_{\la}$-homogeneous of degree $\kappa$ if
\[
v\circ \delta_\la = \la^\kappa\ v.
\]
The infinitesimal generator of the anisotropic dilations \eqref{dil}  is the vector field
\begin{equation}\label{Z}
Z= \sum_{i=1}^m z_i \partial_{z_i} + (\alpha+1)\sum_{j=1}^k t_j \partial_{t_j}.
\end{equation} 
It is standard to verify that $v$ is homogeneous of degree $\kappa$ with respect to \eqref{dil} if and only if the generalized Euler formula holds
\[
Zv=\kappa\ v.
\]

The homogeneous dimension with respect to the anisotropic dilations \eqref{dil}
is the number 
\begin{equation}\label{Q}
Q= m + (\alpha+1) k.
\end{equation}
This terminology is justified by the scaling property of Lebesgue measure
\[
d(\delta_\lambda(z,t)) = \lambda^Q\,dz\,dt.
\]

The following pseudo-gauge 
\begin{equation}\label{rho}
\rho_\alpha(z,t)=(|z|^{2(\alpha+1)} + (\alpha+1)^2 |t|^2)^{\frac{1}{2(\alpha+1)}},
\end{equation}
with its ``ball" and ``sphere" of radius $r$ centered at the origin,
\[
B_r=\{(z,t)\in\mathbb R^{m+k}:\rho(z,t)<r\},
\qquad
S_r=\{(z,t)\in\mathbb R^{m+k}:\rho(z,t)=r\},
\]
plays an important role in the analysis of $\Ba$. It was proved in \cite[Prop. 2.1]{G} that the fundamental solution of $\Ba$  with pole at the origin is given by the formula
\[
\mathscr E_\alpha(z,t) = \frac{C}{\rho_\alpha(z,t)^{Q-2}},\ \ \ \ \ \ \ \ \ (z,t)\not= (0,0),
\]
where $C = C(m,k,\alpha)>0$ is an explicit constant.

To simplify the notation, whenever $\alpha$ is fixed in the discussion we write $\rho$ in place of $\rho_\alpha$. Since it is clear from \eqref{dil} and \eqref{rho} that $\rho$ is one-homogeneous with respect to \eqref{dil}, the generalized Euler formula gives
\begin{equation}\label{hg}
Z\rho=\rho.
\end{equation}
We also need the angle function $\psi$ introduced in \cite{G}
\begin{equation}\label{psi}
\psi = |X\rho|^2= \frac{|z|^{2\alpha}}{\rho^{2\alpha}}.
\end{equation}
The function $\psi$ vanishes on the characteristic manifold $M=\Rn \times \{0\}$, and clearly satisfies $0\leq \psi \leq 1$. Since $\psi$ is homogeneous of degree zero with respect to \eqref{dil}, one has
\begin{equation}\label{Zpsi}
 Z\psi = 0.
 \end{equation}

\medskip

%%%%%%%%%%%%%

Throughout the paper, whenever convenient we will use the summation convention over repeated indices. Let $N = m + k$, and denote an arbitrary point in $\RN$ as  $(z,t) \in \R^m \times \R^k$. If we consider the vector fields $X_1,...,X_N$ in $\R^N$ defined by 
\begin{align}\label{df}
& X_i= \partial_{z_i},\ \ \  i=1, ...m,\ \ \ \  \ \ \ \ X_{m+j}= |z|^{\alpha} \partial_{t_j},\ \ \   j=1, ...k,
\end{align}
then it is immediate to recognise that
\begin{equation}\label{pbeta}
\Ba u= \sum_{i=1}^N X_i^2 u.
\end{equation}
Given a function $f$, we respectively denote by
\begin{equation}\label{grad}
Xf= (X_1f,...,X_Nf),\ \ \ \ \ \  \ |Xf|^2= \sa Xf,Xf\da = \sum_{i=1}^N (X_i f)^2 = |\nabla_z f|^2 + |z|^{2\alpha}|\nabla_t f|^2,
\end{equation}
the intrinsic (degenerate) gradient of a function $f$, and the square of its length. 

If $h\in C^2(\R)$ and $v\in C^2(\R^N)$, then we have the following identities from \cite{G}:
 \begin{equation}\label{ii}
 \Ba h(\rho) = \psi \left(h''(\rho) + \frac{Q-1}{\rho} h'(\rho)\right),
 \end{equation}
 and 
 \begin{equation}\label{h10}
\sa Xv,X\rho\da = \sum_{i=1}^N X_i v X_i \rho = \frac{Zv}{\rho} \psi.
\end{equation}
For the unfamiliar reader, we mention that \eqref{h10} represents the important dividing line between the operator  \eqref{pbeta0} and the above mentioned horizontal Laplacian on the Heisenberg group $\mathbb H^n$. For the latter, the complex geometry causes \eqref{h10} to badly fail. 

Next, we specify the assumptions on the variable-coefficient operator
$\mathscr{L}_\alpha$ in \eqref{La}. 
These hypotheses, introduced in \cite{GV}, encode an intrinsic Lipschitz regularity of the coefficients
$a_{ij}$ and, in the limit $\alpha\to 0^+$, reduce to the classical Lipschitz condition. As is well known, this level of regularity is optimal for unique continuation for second order uniformly elliptic equations, see \cite{AKS, GLcpam}.

\begin{hyp}\label{H}
There exists $\Lambda>0$ such that following estimates hold:
\begin{equation*}
|b_{ij}| := |a_{ij} - \delta_{ij}|\ \leq\
\begin{cases}
\Lambda\rho, \hskip1.4truein \text{ for }\ 1\leq i,\, j\leq m,
\\
\Lambda \psi^{\frac12+\frac1{2\alpha}}\rho\, =\, \Lambda\frac{|z|^{\alpha+1}}{\rho^\alpha},\quad\ \ \text{otherwise},
\end{cases}
\end{equation*}
and
\begin{equation*}
|X_kb_{ij}| = |X_ka_{ij}|\ \leq\
\begin{cases}
\Lambda, \hskip1.1truein \text{ for }\quad  1\leq k\leq m ,\ \text{ and }\ 1\leq i,\,j\leq m
\\
\Lambda \psi^{1+\frac{1}{2\alpha}}\quad \ \ \ \ \ \ \ \ \ \ \ \ \text{when $k >m$ and $\max\{i, j\} > m$}
\\
\Lambda \psi^{1/2}\quad\ \ \ \ \ \ \ \ \ \ \ \ \ \ \text{otherwise}.
	\end{cases}
	\end{equation*}
\end{hyp}

We close this section by introducing the relevant notion of vanishing to infinite order.

\begin{dfn}\label{v0}
  We  say  that $u\in L^2_{loc}(\R^{N})$ vanishes to infinite order at the origin if for every $\ell>0$ one has as $r \rightarrow 0$,
 \begin{align}\label{vanLp}
 \int_{B_r} |u|^2  = O(r^\ell).
 \end{align}
 \end{dfn}
 
For the reader's convenience, and in view of its use in the proofs of Theorems~\ref{main1} and~\ref{T:La}, we record the following extension of a standard result.

\begin{lemma}\label{L:vanish}
Assume \eqref{vassump}, and let \(u \in S^{2,2}(B_R)\) be a weak solution of \eqref{main} in \(B_R\). If \(u\) vanishes to infinite order at \((0,0)\), then the same is true for \(Xu\).
\end{lemma}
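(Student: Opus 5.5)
The approach is a Caccioppoli inequality on dyadic gauge annuli/balls, adapted to the anisotropic geometry, with the singular potential absorbed through the growth condition \eqref{vassump}. Fix $0<r<R/2$. We choose a cutoff $\eta\in C^\infty_0(B_{2r})$ with $\eta\equiv 1$ on $B_r$, $0\le\eta\le1$, and $|X\eta|\le C/r$. The last bound is obtained by taking $\eta=h(\rho)$ with $h$ a one-variable cutoff: then $|X\eta| = |h'(\rho)||X\rho| = |h'(\rho)|\psi^{1/2}\le C/r$, since $0\le\psi\le1$ by \eqref{psi}. Since $u\in S^{2,2}(B_R)$ is a weak solution, we may test \eqref{main} with $\eta^2 u$ and integrate by parts using \eqref{pbeta}; this gives
\[
\int \eta^2|Xu|^2 = -2\int \eta u\,\sa Xu, X\eta\da - \int V\eta^2 u^2 .
\]
For the first term we use Cauchy--Schwarz and absorb half into the left-hand side, leaving $C r^{-2}\int_{B_{2r}}u^2$.

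The potential term is the only delicate point. By \eqref{vassump}, $|V|\eta^2u^2\le C_0\rho^{-(2-\delta)}\eta^2u^2$, and this weight is unbounded near the origin. To handle it, we decompose $B_{2r}$ into the dyadic shells $A_j=B_{2^{1-j}r}\setminus B_{2^{-j}r}$, $j\ge0$. On $A_j$ we have $\rho^{-(2-\delta)}\le (2^{-j}r)^{-2}$, because $\delta>0$ and $r<1$ may be assumed. The infinite-order vanishing of $u$ gives, for any $\ell$,
\[
\int_{A_j}u^2\le C_\ell (2^{1-j}r)^{\ell}.
\]
Taking $\ell\ge 4$, we obtain
\[
\int_{B_{2r}}\rho^{-(2-\delta)}u^2\le \sum_j C_\ell\,2^{2j}r^{-2}(2^{1-j}r)^{\ell}\le C'_\ell\, r^{\ell-2},
\]
since the geometric series converges. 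Note that this bound also justifies the finiteness of $\int V u^2\eta^2$, and hence the legitimacy of the test function. If one prefers to avoid the point singularity in the weak formulation, one can instead use $\eta(1-\chi_\epsilon)$, where $\chi_\epsilon$ cuts off near $0$, and let $\epsilon\to0$. The error terms are then controlled by $\epsilon^{-2}\int_{B_{2\epsilon}}u^2\to0$, again by the vanishing of $u$.

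Combining these estimates, we get
\[
\int_{B_r}|Xu|^2\le C r^{-2}\int_{B_{2r}}u^2 + C r^{\ell-2} = O(r^{\ell-2})
\]
for every $\ell$. This is exactly the statement that $Xu$ vanishes to infinite order in the sense of Definition \ref{v0}. The step requiring the most care is the treatment of the singular potential and the justification of the integration by parts near the origin, which the dyadic argument above handles.
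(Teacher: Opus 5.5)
Your proof is correct, but it places the dyadic decomposition on a different term than the paper does.

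\textbf{The paper's route.} It tests the equation with $u\phi^2$, where $\phi$ is supported in the annulus $\{r_0/2<\rho<4r_0\}$. On that support $|V|\le C_0(r_0/2)^{\delta-2}\le C r_0^{-2}$, so the singular potential is just a bounded term. This gives the annular Caccioppoli bound $\int_{r_0<\rho<2r_0}|Xu|^2\le C r_0^{-2}\int_{B_{4r_0}}u^2$. The dyadic summation, with $r_0=2^{-i}r$, is then carried out on the gradient integrals.

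\textbf{Your route.} You use a single ball cutoff $\eta$, so the gradient side needs no decomposition. The price is that the potential term now sees the singularity of $\rho^{\delta-2}$ at the origin. You handle it with the shells $A_j$, using the infinite-order vanishing of $u$ to make $\sum_j\int_{A_j}\rho^{\delta-2}u^2$ converge.

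\textbf{What each choice buys.} The paper's annular cutoff keeps the test function away from $(0,0)$, so admissibility near the point singularity never arises. Your ball cutoff needs the justification you supply, either finiteness of $\int V\eta^2u^2$ or the inner cutoff $\chi_\epsilon$. That step is simpler than you make it: since $u\in S^{2,2}(B_R)$ solves the equation, $Vu=\Ba u\in L^2(B_R)$, so $\int V\eta^2u^2$ is finite by Cauchy--Schwarz.

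\textbf{A small point.} The bound $\rho^{\delta-2}\le(2^{-j}r)^{-2}$ on $A_j$ uses only $2-\delta\le2$ (that is, $\delta\ge0$) together with $r<1$, not $\delta>0$. Neither proof of this lemma actually needs $\delta>0$.
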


\begin{proof}
Fix \(r_0>0\). We use the test function \(w = u \phi^2\) in the weak formulation of \eqref{main}, where
\(\phi(z,t) = h(\rho(z,t))\) is a smooth cutoff function such that
\(\phi \equiv 1\) in \(\{ r_0 < \rho < 2r_0 \}\) and \(\phi \equiv 0\) outside
\(\{ r_0/2 < \rho < 4r_0 \}\).
Integrating by parts yields
\begin{equation}\label{kj1}
\int |Xu|^2 \phi^2
\le 2 \int |Xu|\,|u|\,|\phi|\,|X\phi|
+ \int |V|\,u^2 \phi^2 .
\end{equation}
Using the inequality
\[
2 \int |Xu|\,|u|\,|\phi|\,|X\phi|
\le \frac12 \int |Xu|^2 \phi^2
+ 4 \int |u|^2 |X\phi|^2,
\]
together with the bound on \(V\) in \eqref{vassump} and the estimate
\(|X\phi| \le C/r_0\), we obtain, for a possibly different constant \(C>0\),
\begin{equation}\label{kj2}
\int_{r_0 < \rho < 2r_0} |Xu|^2
\le \frac{C}{r_0^2} \int_{B_{4r_0}} u^2 .
\end{equation}

Since \(u\) vanishes to infinite order at the origin, for every \(k \in \mathbb N\) there exists \(\widetilde C_k>0\) such that
\[
\int_{B_{4r_0}} u^2 \le \widetilde C_k\, r_0^{k+2}.
\]
Substituting this bound into \eqref{kj2} with \(r_0 = 2^{-i} r\) and summing over \(i \in \mathbb N\), we obtain
\begin{align}\label{kj4}
\int_{B_r} |Xu|^2
&= \sum_{i=1}^{\infty} \int_{2^{-i} r < \rho < 2^{-i+1} r} |Xu|^2 \\
&\le C \widetilde C_k\, r^k \sum_{i=1}^{\infty} 2^{-ik}
= C_k\, r^k .
\nonumber
\end{align}
This shows that \(Xu\) vanishes to infinite order at \((0,0)\), completing the proof.

\end{proof}

\medskip

\subsection{The Carleman estimates}\label{S:carl}
We now state the $L^2$ Carleman estimate derived in \cite[Theor. 1.1]{BGM}. As we have mentioned in Section \ref{S:intro}, such estimate plays a crucial role in the proof of our main results Theorem \ref{main1} and \ref{T:La}.

The relevant function spaces for our work are defined as follows. For a multi-index $\beta =(\beta_1,...,\beta_N)$, we routinely denote $X^\beta f = X_1^{\beta_1}...X_N^{\beta_N} f$.  
Given an open set $\Om\subset \R^N$,  we denote by
\[
W^{k,p}(\Om) = \{f\in L^p(\Om)\mid X^\beta f\in L^p(\Om),\ |\beta|\le k\}
\]
the Banach space endowed with the usual norm 
\[
||f||_{W^{k,p}(\Om)} = ||f||_{L^p(\Om)} + \sum_{|\alpha|\le k} ||X^\alpha f||_{L^p(\Om)}.
\]
We also set
\[
S^{k,p}(\Om) = \overline{C^\infty(\Om) \cap W^{k,p}(\Om)}^{W^{k,p}(\Om)},
\]
and 
\[
S^{k,p}_0(\Om) = \overline{C^\infty_0(\Om)}^{W^{k,p}(\Om)}.
\]

Henceforth, to simplify the notation we will routinely omit the relevant measure (volume or surface) in all integrals involved, with the exception of Lemma \ref{hardy}.

 \begin{thrm} \label{thm2}
For every $\ve>0$, there exists $C=C(m, k, \ve)>0$ such that for every $\gamma > \max\{0,(Q-4)/2\}$, $R>0$, and $v \in S^{2,2}_{0}(B_{R_0} \setminus \{0\})$ with  $\operatorname{supp} u \subset B_{R_0} \setminus \{0\}$, one has
\begin{align}\label{est1}
&\gamma^2 \int_{B_{R_0}} \rho^{-2\gamma-4+\ve}v^2 \psi + \int_{B_{R_0}}  \rho^{-2\gamma-2+\ve} |Xv|^2  \leq C R_0^{\ve}  \int_{B_{R_0}} \rho^{-2\gamma}(\Ba v)^2 \psi^{-1}.
\end{align}
 \end{thrm}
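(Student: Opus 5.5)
We prove Theorem \ref{thm2} by the standard conjugation method: set $v=\rho^{\gamma}w$ (equivalently $w=\rho^{-\gamma}v$), rewrite $\rho^{-\gamma}\Ba(\rho^\gamma w)$ as a symmetric plus an antisymmetric part, and bound the $L^2$ norm of the conjugated operator from below by positive commutator terms. Throughout we use the identities \eqref{ii}, \eqref{h10}, \eqref{hg} and \eqref{Zpsi}. They show that the radial structure interacts with $\Ba$ only through the vector field $Z$ and the factor $\psi$.

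\textbf{Step 1: conjugated operator.} By \eqref{ii} and \eqref{h10},
\[
\rho^{-\gamma}\Ba(\rho^\gamma w)=\Ba w+\frac{2\gamma\psi}{\rho^2}Zw+\frac{\gamma(\gamma+Q-2)\psi}{\rho^2}w.
\]
Because the right-hand side of \eqref{est1} carries the weight $\psi^{-1}$, we multiply by $\rho\,\psi^{-1/2}$, or rather work with $\rho^{2}\psi^{-1}$ times the square. The natural splitting is
\[
\mathcal S w=\frac{\rho^2}{\psi}\Ba w+\gamma(\gamma+Q-2)w,\qquad \mathcal A w=2\gamma\big(Zw+\tfrac{Q-2}{2}w\big)+\dots
\]
This is arranged so that $Z+\frac Q2$ is antisymmetric in $L^2(\rho^{-Q}dzdt)$ (Lebesgue measure scales with exponent $Q$). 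We will change variables to logarithmic polar coordinates $s=\log\rho$ adapted to $\delta_\lambda$, where $Z=\partial_s$. The measure becomes $e^{sQ}ds\,d\sigma$, and after conjugating by $e^{-sQ/2}$ we get a flat measure.

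\textbf{Step 2: lower bound.} We expand $\|\mathcal S w+\mathcal A w\|^2\ge \|\mathcal Sw\|^2+\|\mathcal Aw\|^2+\langle[\mathcal S,\mathcal A]w,w\rangle$. The key point is that $\rho^2\psi^{-1}\Ba$ commutes well with $Z$, since both $\rho^2\psi^{-1}$ and $\Ba$ are $\delta_\lambda$-homogeneous and $Z\psi=0$. So the commutator is exactly zero in the pure power-weight case. The $\varepsilon$-loss in \eqref{est1} is then what gives positivity: we replace $\rho^{-\gamma}$ by the weight $\rho^{-\gamma}e^{\ve s/2}$, i.e.\ a weight slightly convex in $s$. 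Its commutator produces terms of order $\ve\gamma^2\rho^{-\ve}|w|^2$ and $\ve\rho^{-\ve}\rho^2|Xw|^2$ times $\psi$-factors. The $\|\mathcal A w\|^2$ term controls $\gamma^2\|Zw\|^2$, and the restriction $\gamma>\max\{0,(Q-4)/2\}$ keeps the zero-order coefficients from changing sign.

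\textbf{Step 3: gradient term.} To recover $\int\rho^{-2\gamma-2+\ve}|Xv|^2$ we test the equation against $\rho^{-2\gamma-2+\ve}v$ and integrate by parts using \eqref{h10}. This bounds the gradient term by the zero-order term $\gamma^2\int\rho^{-2\gamma-4+\ve}\psi v^2$ plus $\int \rho^{-2\gamma-2+\ve}|v||\Ba v|$. We then apply Cauchy--Schwarz with $\psi^{\pm1/2}$ splitting. The factor $R_0^\ve$ comes from $\rho^{\ve}\le R_0^\ve$ on the support.

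\textbf{Main obstacle.} The main difficulty is Step 2. The weight $\psi$ degenerates on $M$, so the commutator terms come with $\psi$ rather than $1$. One must check that every error term carries at least one factor of $\psi$, which is where \eqref{h10} is essential. One must also check that the angular part $\rho^2\psi^{-1}\Ba$ restricted to spheres is nonpositive, so that its cross term with the zero-order part has the right sign.
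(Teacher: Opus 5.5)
The paper gives no proof of this estimate; it is imported from [BGM, Theorem 1.1]. So I judge your outline on its own. The conjugation formula in Step 1 is right, and Step 3 is fine: integrating $\int\rho^{-2\gamma-2+\ve}|Xv|^2$ by parts, using $\sum_i X_i(\rho^{a}X_i\rho)=(a+Q-1)\rho^{a-1}\psi$ and Cauchy--Schwarz, reduces the gradient term to the zero-order one.

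The genuine gap is in Step 2, where the zero-order term is supposed to come from.

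\begin{itemize}
\item Your ``slightly convex'' weight is $\rho^{-\gamma}e^{\ve s/2}=\rho^{-(\gamma-\ve/2)}$. This is again a pure power, since its logarithm is linear in $s=\log\rho$. Its commutator therefore vanishes identically, as before, and no terms of order $\ve\gamma^2$ appear.
\item Positivity cannot be extracted from $\|\mathcal Sw\|^2$ either. The cross term of the nonpositive angular part with the positive constant has the wrong sign.
\item For resonant $\gamma$ the symmetric part has an approximate kernel. Take the admissible value $\gamma=\frac{Q-2}{2}$ and $\mathcal S$ as defined below. The degree-one solution $z_1$ of $\Ba$ gives $\mathcal S\big(\eta(\log\rho)z_1/\rho\big)=\eta''(\log\rho)\,z_1/\rho$, which is negligible for slowly varying $\eta$.
\item The same test function shows that \eqref{est1} is false for $\ve=0$. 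So the $\ve$ cannot come from a commutator.
\end{itemize}

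The missing idea is to discard $\|\mathcal S w\|^2$, keep only the antisymmetric part, and apply a one-dimensional Hardy inequality along the dilation orbits. This is where both the loss $\rho^{\ve}$ and the factor $R_0^\ve$ come from. Set $\beta=\gamma-\frac{Q-4}{2}$, $\tilde w=\rho^{-\beta}v$ and $d\nu=\rho^{-Q}\psi\,dz\,dt$. Then
\[
\int\rho^{-2\gamma}(\Ba v)^2\psi^{-1}=\int\big(\mathcal S\tilde w+(2\gamma+2)Z\tilde w\big)^2d\nu,\qquad \mathcal S=\tfrac{\rho^2}{\psi}\Ba-(Q-2)Z+\beta(\beta+Q-2).
\]
Three facts make the cross term vanish:
\begin{itemize}
\item $\mathcal S$ is symmetric in $L^2(\nu)$, because by \eqref{h10} one has $\int(\frac{\rho^2}{\psi}\Ba f-(Q-2)Zf)g\,d\nu=-\int\rho^{2-Q}\sa Xf,Xg\da\,dz\,dt$.
\item $Z$ is antisymmetric in $L^2(\nu)$, because $\mathrm{div}(\rho^{-Q}\psi Z)=0$.
\item $[\mathcal S,Z]=0$.
\end{itemize}
So the antisymmetric part is $(2\gamma+2)Z$, not $2\gamma(Z+\frac{Q-2}{2})$. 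Hence the right-hand side is at least $(2\gamma+2)^2\int(Z\tilde w)^2d\nu$.

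Now use $\mathrm{div}(\rho^{\ve-Q}\psi Z)=\ve\rho^{\ve-Q}\psi$ and $\rho<R_0$ on the support:
\[
\int\rho^{-2\gamma-4+\ve}v^2\psi=\int\rho^{\ve}\tilde w^2d\nu\le\frac{4}{\ve^2}\int\rho^{\ve}(Z\tilde w)^2d\nu\le\frac{4R_0^\ve}{\ve^2}\int(Z\tilde w)^2d\nu .
\]
This gives the zero-order term of \eqref{est1} with $C\sim\ve^{-2}$. It even gives $(\gamma+1)^2$ in place of $\gamma^2$. That stronger form is what Step 3 needs for small $\gamma$, since your integration by parts produces a zero-order coefficient of size $(\gamma+1)^2$, not $\gamma^2$. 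The sign of $\beta(\beta+Q-2)$ plays no role here. With this in place, your Step 3 completes the proof.
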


We next state the Carleman estimate that we will use in the proof of Theorem \ref{T:La}, see \cite[Theor. 3.1]{BMa}.

\begin{thrm} \label{thm3}
Assume the Hypothesis 2.1 on the matrix of the coefficients. For every $\ve\in (0,1)$, there exist sufficiently large $C=C(\ve)>0$ and $\gamma_0 = \gamma_0(\ve)>0$, and a sufficiently small $R_0 = R_0(\ve)>0$, such that for every $\gamma > \gamma_0$, $R\le R_0$, and $v \in S^{2,2}_{0}(B_{R} \setminus \{0\})$ with  $\operatorname{supp} v \subset B_{R} \setminus \{0\}$, one has
\begin{align}\label{est12}
&\gamma^3 \int \rho^{-2\gamma-4+\ve} e^{2\gamma \rho^\ve} v^2 \psi + \gamma \int  \rho^{-2\gamma-2+\ve} e^{2\gamma \rho^\ve}  \sa A Xv,Xv\da  \leq C  \int \rho^{-2\gamma} e^{2\gamma \rho^\ve} (\mathscr L_\alpha v)^2 \psi^{-1}.
\end{align}
 \end{thrm}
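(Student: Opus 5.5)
The plan is to prove \eqref{est12} by conjugating with the weight and running a Rellich-type positive-commutator argument. The constant-coefficient operator $\Ba$ plays the role of the model, and the Lipschitz errors coming from $b_{ij}=a_{ij}-\delta_{ij}$ are treated as perturbations. The extra factor $e^{2\gamma\rho^\ve}$ supplies the convexity needed to absorb those errors.

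\textbf{Step 1: conjugation.} Set $\phi(\rho)=-\gamma\log\rho+\gamma\rho^\ve$ and $w=e^{\phi}v$, so that $v=e^{-\phi}w$. I would compute $e^{\phi}\mathscr L_\alpha(e^{-\phi}w)$ using \eqref{ii}, \eqref{h10} and the identities $Z\rho=\rho$, $Z\psi=0$. Here one replaces $X\rho$ by its $A$-analogue $AX\rho$, and $Z$ by the vector field $Z_A$ defined by $\sa AXw,X\rho\da=\psi\, Z_Aw/\rho$ up to error terms.

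In the model case $A=I$ this gives
\[
e^{\phi}\Ba(e^{-\phi}w)=\Ba w-2\phi'(\rho)\frac{\psi}{\rho}Zw+\psi\Big(\phi'^2-\phi''-\tfrac{Q-1}{\rho}\phi'\Big)w .
\]
I would then split this expression into a part $\mathcal S w$ that is symmetric and a part $\mathcal A w$ that is antisymmetric, both with respect to the measure $\rho^{\,\cdot}\psi^{-1}\,dz\,dt$. The antisymmetric part is essentially $\phi'\frac{\psi}{\rho}Z$ suitably symmetrized. Since $\psi^{-1}$ appears on the right of \eqref{est12}, the factor $\psi$ in front of the lower-order terms cancels exactly. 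This cancellation is the reason for using $\psi^{-1}$ as a weight.

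\textbf{Step 2: positivity.} Expand
\[
\int(\mathcal S w+\mathcal A w)^2\psi^{-1}\ \ge\ 2\int \mathcal S w\,\mathcal A w\,\psi^{-1}.
\]
I would compute the cross term by integrating by parts along $Z$, which corresponds to differentiation in $\log\rho$. The $-\gamma\log\rho$ part of $\phi$ produces terms that are only nonnegative modulo lower order. The key positive contribution comes from $\rho\,\partial_\rho(\rho\phi')=\gamma\ve^2\rho^\ve$, which yields
\[
\gamma^3\ve^2\int\rho^{-4+\ve}w^2\psi
\]
after translating back from $w$ to $v$. The restriction $\gamma>\gamma_0$ guarantees that the remaining quadratic terms of lower order in $\gamma$ can be absorbed.

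\textbf{Step 3: gradient term and variable coefficients.} For the gradient term, I would test the equation against $\rho^{-2\gamma-2+\ve}e^{2\gamma\rho^\ve}v$ and integrate by parts. This expresses $\gamma\int\rho^{-2\gamma-2+\ve}e^{2\gamma\rho^\ve}\sa AXv,Xv\da$ through $\int v\,\mathscr L_\alpha v$ plus zero-order terms. Cauchy--Schwarz then bounds it by the right-hand side and the $\gamma^3$ term already obtained.

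For the variable-coefficient errors, Hypothesis 2.1 gives two bounds. First, $|b_{ij}|\lesssim\rho$ in the $z$-block and $\lesssim\psi^{\frac12+\frac1{2\alpha}}\rho$ in the mixed and $t$-blocks. Second, the derivatives $X_kb_{ij}$ carry the powers $\psi^{1/2}$ or $\psi^{1+\frac1{2\alpha}}$. Consequently every error term in the commutator is bounded by $\gamma^2\rho^{-3}$, or by $\gamma\rho^{-1}|Xw|\,\rho^{-1}$, times the same power of $\psi$ that appears in the main terms. Compared with the main terms, these errors carry an extra factor $\rho^{1-\ve}\le R_0^{1-\ve}$. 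Choosing $R_0$ small therefore lets them be absorbed.

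\textbf{Main obstacle.} I expect the hardest step to be the bookkeeping of the $\psi$-weights in the mixed and $t$-derivative terms. Near $M$ the factor $\psi$ degenerates, and there \eqref{h10} holds only approximately once $A\neq I$. The precise exponents in Hypothesis 2.1 are designed to make these terms homogeneous of the correct degree relative to $\psi^{-1}$. I would first verify this absorption on the single worst term $b_{m+j,m+l}X_{m+j}X_{m+l}w$ paired with $\phi'\frac{\psi}{\rho}Zw$.
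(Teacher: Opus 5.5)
The paper does not prove this statement. It is imported verbatim from \cite[Theor.~3.1]{BMa} and used as a black box, so there is no in-paper argument to compare with. Your outline is the standard machinery for such estimates. Your conjugation formula is right. The gradient recovery in Step 3 is also correct: you test against $\rho^{-2\gamma-2+\ve}e^{2\gamma\rho^\ve}v$, split $\psi^{-1/2}\cdot\psi^{1/2}$ in Cauchy--Schwarz, and use $|X\rho|^2=\psi$.

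\textbf{Step 2 fails as you set it up.} You take $w=e^{\phi}v$ and the norm $\int(\cdot)^2\psi^{-1}\,dz\,dt$ with no power of $\rho$. Then the splitting has to be done for $\psi^{-1}e^{\phi}\Ba(e^{-\phi}\,\cdot)$ in $L^2(\psi\,dz\,dt)$. Indeed, $\Ba$ is symmetric only for $dz\,dt$, so no measure $\rho^a\psi^{-1}dz\,dt$ works. The splitting is $\mathcal S=\psi^{-1}\Ba+\phi'^2$ and $\mathcal A=-2\frac{\phi'}{\rho}Z-\frac{(Q-1)\phi'}{\rho}-\phi''$. Using $[X_i,Z]=X_i$ and \eqref{h10}, your cross term is
\[
2\int\mathcal Sw\,\mathcal Aw\,\psi=4\int\frac{\phi'}{\rho}|Xw|^2+4\int\frac{(\phi'/\rho)'}{\rho}\,\psi\,(Zw)^2+(\text{zero order}).
\]
Here $\phi'/\rho=-\gamma\rho^{-2}(1-\ve\rho^\ve)$. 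Split $|Xw|^2=\rho^{-2}\psi(Zw)^2+|X_Tw|^2$, where $X_Tw$ is the part of $Xw$ orthogonal to $X\rho$. What remains is $-4\gamma\int\rho^{-2}(1-\ve\rho^\ve)|X_Tw|^2$.

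This term is not lower order. It comes from $[\rho^{-2},Z]\neq0$ acting on the part of $\Ba$ tangent to the spheres $S_r$. For $w$ oscillating rapidly along $S_r$ it makes the cross term negative, so discarding $\|\mathcal Sw\|^2$ loses the estimate.

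The power of $\rho$ you leave open is therefore not a free parameter; you need the dilation-invariant normalization. Take $\Phi=-(\gamma+2-\frac Q2)\log\rho+\gamma\rho^\ve$ and $w=e^{\Phi}v$. Then the right side of \eqref{est12} for $\Ba$ equals $\int\big(\rho^2\psi^{-1}e^{\Phi}\Ba(e^{-\Phi}w)\big)^2\psi\rho^{-Q}$.

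By \eqref{ii}, \eqref{h10} and homogeneity, $\Ba=\frac{\psi}{\rho^2}(Z^2+(Q-2)Z)+\rho^{-2}\mathcal L_S$, with $\mathcal L_S$ tangential and commuting with $Z$. Moreover $Z$ is antisymmetric in $L^2(\psi\rho^{-Q})$, since $Z\psi=0$ and $\operatorname{div}(\rho^{-Q}Z)=0$. Then the logarithmic part of $\Phi$ contributes nothing to the commutator. With $\Phi_s=Z\Phi$ and $\Phi_{ss}=Z^2\Phi=\gamma\ve^2\rho^\ve$,
\[
\langle[\mathcal S,\mathcal A]w,w\rangle=4\int\Phi_{ss}(Zw)^2\psi\rho^{-Q}+\int\big((2\Phi_s-Q+2)^2\Phi_{ss}-Z^2\Phi_{ss}\big)w^2\psi\rho^{-Q}.
\]
For $\gamma$ large this yields the gain $\gamma^3\ve^2\int\rho^{-2\gamma-4+\ve}e^{2\gamma\rho^\ve}v^2\psi$ that you describe.

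\textbf{Two further points in Step 3 need care.}

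First, this commutator controls only $Zw$, while the Lipschitz errors contain the full gradient. Let $I$, $J$, $R$ denote the three terms of \eqref{est12}. The argument has to be closed simultaneously, not sequentially:
\[
I\le C\big(R+R_0^{1-\ve}(I+J)\big),\qquad J\le C(R+I).
\]

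Second, write $B=[b_{ij}]$. The first-order error $\langle Xw,BX\rho\rangle$ cannot be moved to the right-hand side, because its naive size exceeds $J$ by a factor $\gamma\rho^{2-\ve}$. It must stay inside $\mathcal A$ through your $Z_A$. This generates terms involving $X(\psi^{-1})$, $X_kX_j\rho$, $X_kb_{ij}$ and $[X_i,X_{m+j}]=\alpha z_i|z|^{-2}X_{m+j}$, all singular at $z=0$. That these are $O(\rho)$ relative to the model terms rests on the exact powers of $\psi$ in Hypothesis 2.1. For instance, $|BX\rho|\lesssim\rho\,\psi^{1+\frac1{2\alpha}}$ compensates $|X\psi^{-1}|\lesssim\psi^{-1}|z|^{-1}$. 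That verification is the real content of the theorem, and your proposal asserts it rather than carrying it out.
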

 
For the sake of the reader's understanding, we mention that the additional factor  $e^{2\gamma \rho^{\ve}} $ in the Carleman weight in \eqref{est12} is required to control certain delicate error terms arising in the computations. These terms originate from the ``Lipschitz" perturbation of the principal part of $\mathscr L_\alpha$ and are therefore absent in the ``constant-coefficient" case of
$\Ba$.  Moreover, this exponential factor yields improved coercivity compared to \eqref{est1}: specifically, the estimate features a factor
$\gamma^3$ in front of the zero order term and a factor $\gamma$ in front of the gradient term as opposed to \eqref{est1} above.

We emphasize that the appearance of the two weights $\psi$ and $\psi^{-1}$ on the left- and right-hand sides of $L^2$ estimates such as \eqref{est1} and \eqref{est12} is intrinsic and cannot be avoided. This feature is a major source of analytical difficulty and has, until now, obstructed the derivation with $L^2$ methods of the strong unique continuation property for degenerate operators such as \eqref{pbeta0} and potentials $V\in L^\infty_{loc}$.

Remarkably, we have been able to overcome this difficulty by exploiting the classical Hardy inequality, which we now recall. It is precisely this argument that necessitates the restrictions $m\ge 3$ and $0<\alpha\le 1$.

\begin{lemma}\label{hardy}
Let $m\ge 3$. For $f \in C^{\infty}_0( \R^m)$ we have
\begin{equation*}
\int_{\R^m} \frac{f^2}{|z|^2} dz \leq \left(\frac{2}{m-2}\right)^2 \int_{\R^m} |\nabla_z f|^2 dz. 
\end{equation*}
\end{lemma}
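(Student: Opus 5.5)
The statement to prove is the classical Hardy inequality in $\R^m$, $m\ge3$. I will argue by integration by parts against the radial vector field $z/|z|^2$ and then apply Cauchy--Schwarz.

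First, for $m\ge 3$ one computes, for $z\neq 0$,
\[
\operatorname{div}_z\Big(\frac{z}{|z|^2}\Big)=\frac{m}{|z|^2}-\frac{2|z|^2}{|z|^4}=\frac{m-2}{|z|^2}.
\]
The function $|z|^{-2}$ is locally integrable in $\R^m$ since $m\ge3$. The vector field $f^2 z/|z|^2$ is bounded near the origin by $C|z|^{-1}$, so its flux through the sphere $|z|=\eta$ is $O(\eta^{m-2})\to0$ as $\eta\to0$.

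Second, I apply the divergence theorem on $\{|z|>\eta\}$ and let $\eta\to0$. Since $f$ has compact support, this gives
\[
(m-2)\int_{\R^m}\frac{f^2}{|z|^2}\,dz=-\int_{\R^m}\frac{2f\,\langle\nabla_z f,z\rangle}{|z|^2}\,dz .
\]

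Third, I bound the right-hand side using $|\langle \nabla_z f,z\rangle|\le |\nabla_z f|\,|z|$ and Cauchy--Schwarz:
\[
(m-2)\int\frac{f^2}{|z|^2}\le 2\Big(\int\frac{f^2}{|z|^2}\Big)^{1/2}\Big(\int|\nabla_z f|^2\Big)^{1/2}.
\]
The quantity $\int f^2|z|^{-2}$ is finite by the local integrability noted above. If it vanishes, there is nothing to prove. Otherwise I divide by its square root and square, which yields the constant $\big(\frac{2}{m-2}\big)^2$.

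The only delicate point is justifying the integration by parts across the singularity at $z=0$, and the excision argument in the first step handles it. Everything else is routine.
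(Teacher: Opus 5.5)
Your proof is correct and complete. The identity $\operatorname{div}_z(z/|z|^2)=(m-2)|z|^{-2}$, the excision of $\{|z|<\eta\}$ with boundary flux $O(\eta^{m-2})\to 0$, and Cauchy--Schwarz together give exactly the constant $\left(\frac{2}{m-2}\right)^2$.

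The paper gives no proof of this lemma; it only recalls it as the classical Hardy inequality, and your argument is the standard proof. Note that the paper actually applies it, for a.e.\ $t$, to $z\mapsto\rho^{-\gamma-1+\delta}v(z,t)\in W^{1,2}(\R^m)$. That use needs the routine extension from $C^\infty_0(\R^m)$ by density: apply Fatou's lemma on the left-hand side and $L^2$ convergence of the gradients on the right-hand side.
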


\vskip 0.2in

  %%%%%%%%%%%%%%%%%%%%%%%%%%%%%%%%%%%%%%%%%%%%%%%%%%%%%%%%%% 

 \section{Proof of Theorems \ref{main1} and \ref{T:La}}\label{S:proofs}

In this section we prove our main results. We begin with the

 \begin{proof}[Proof of Theorem \ref{main1}]
 Let $R, \delta >0$ be as in the hypothesis \eqref{main} and \eqref{vassump}, and $R_0<R$ to be chosen later. 
 Denote by $h(s)$ a function in $C^\infty[0,\infty)$, such that $h(s) = 1$ on $0\le s \le R_0/2$ and $h(s) = 0$ for $s\ge R_0$, and denote by $\phi(z,t) = h(\rho(z,t))$ a cut-off function such that 
 $\phi \equiv 1$ in $B_{R_0/2}$ and $\phi \equiv 0$ outside $B_{R_0}$. Using the chain rule and \eqref{ii} we obtain 
\[
X \phi = h'(\rho) X\rho,\ \ \ \ \ \Ba \phi = \psi \left(h''(\rho) + \frac{Q-1}{\rho} h'(\rho)\right).
\]
Keeping \eqref{psi} into account, and that $h'(\rho)$ and $h''(\rho)$ do not vanish only in the region $B_{R_0} \setminus B_{R_0/2}$, we  obtain in such region
\begin{equation}\label{GG}
|X\phi|^2 \leq \frac{C}{R_0^{2}} \psi,\ \ \ \ \ \ \quad (\Ba \phi)^2 \leq \frac{C}{R_0^4}\psi^2,
\end{equation}
for some universal constant $C>0$.

   Since $u$ vanishes to infinite order in the sense of Definition \ref{v0}, and so does $Xu$ by Lemma \ref{L:vanish}, by a standard limiting argument we can apply to $v=u \phi$ the Carleman estimate \eqref{est1}, in which we take $\ve = \delta$, obtaining
\begin{align}\label{e1}
 &\gamma^2 \int \rho^{-2\gamma-4+\delta}v^2 \psi + \int_{B_{R_0}}  \rho^{-2\gamma-2+\delta} |Xv|^2  \leq CR_0^\delta \int \rho^{-2\gamma}(\Ba v)^2 \psi^{-1}. 
 \end{align} 
 
Using the equation \eqref{main} satisfied by $u$, we find
\[
\Ba v = \phi \Ba u + u \Ba \phi + 2 \sa X\phi,Xu\da = V \phi u + u \Ba \phi + 2 \sa X\phi,Xu\da.
\]
Combining this identity with the hypothesis \eqref{vassump} on the zero order term $V$, and with the bounds \eqref{GG}, we find 
\begin{equation}\label{GGG}
(\Ba v)^2 \le C \left[\rho^{2\delta-4} v^2 +  R_0^{-4} u^2 \psi^2 +  R_0^{-4} |Xu|^2 \psi\right], 
\end{equation}
for some universal $C>0$ depending also on $C_0$ in \eqref{vassump}.

If we now use the critical bound \eqref{GGG} to estimate from above the right-hand side in \eqref{e1}, we find 
\begin{align}\label{u1}
 & \gamma^2 \int \rho^{-2\gamma-4+\delta}v^2 \psi + \int_{B_{R_0}}  \rho^{-2\gamma-2+\delta} |Xv|^2  
\\
& \leq CR_0^\delta \int_{B_{R_0}} \rho^{-2\gamma-4 +2\delta} v^2  \psi^{-1} +   C 4^{\gamma} R_0^\delta R_0^{-2 \gamma-4} \int_{B_{R_0} \setminus B_{R_0/2}} \left(|Xu|^2  + u^2 \psi \right). 
\notag
\end{align}

At this stage it is important to highlight the main obstruction in \eqref{u1}, namely the first integral on the right-hand side. Ideally, one would like to absorb this term into the left-hand side; however, this is prevented by the presence of the singular weight $\psi^{-1}$, which directly opposes the weight $\psi$ appearing in the coercive term on the left-hand side of \eqref{u1}. 

In what follows, we show how this difficulty can be overcome in the regime $0<\alpha\le 1$ in \eqref{pbeta0}. Under this hypothesis,  
we obtain from the definition of $\psi$ in \eqref{psi} 
\begin{equation}\label{ps1}
 \psi^{-1} = \frac{\rho^{2\alpha}}{|z|^{2\alpha}}\leq \frac{\rho^2}{|z|^2}.
 \end{equation}
Using \eqref{ps1} we can bound from above the first term on the right-hand side of \eqref{u1} by  
\[
CR_0^\delta \int \rho^{-2\gamma -2+2\delta}\frac{v^2}{|z|^2}.
\]

At this point we write this integral as an iterated integral and, to that on $\R^m$, we apply Lemma \ref{hardy} to the function 
\[
f = \rho^{-\gamma -1+\delta} v
\]
(this is possible since, by the assumptions on $u$, for a.e. $t \in \mathbb R^k$ with $|t|>0$ the function $z\to f(z,t)$ is in the standard Sobolev space $W^{1,2}(\Rm)$). Precisely, we proceed as follows
\begin{align}\label{j1}
 & CR_0^\delta \int \rho^{-2\gamma -2+2\delta}\frac{v^2}{|z|^2}=  CR_0^\delta \int_{\R^k} \int_{\R^m} \frac{(\rho^{-\gamma-1+\delta}v)^2}{|z|^2} \leq C_1 R_0^\delta \int_{\R^k} \int_{\R^m} |\nabla_z(\rho^{-\gamma-1+\delta}v)|^2 
\\
& \leq C_1 R_0^\delta \int \rho^{-2\gamma -2 +2\delta} |\nabla_z v|^2 + C_1  R_0^\delta \gamma^2 \int \rho^{-2\gamma -4 +2\delta} v^2 |\nabla_z \rho|^2. 
\notag
\end{align}
 
By the second identity in \eqref{grad}, we now make the trivial observation that
\[
|\nabla_z v|^2 \leq |Xv|^2,\ \ \ \ \ |\nabla_z \rho|^2 \leq |X\rho|^2 = \psi.
\]
Therefore, for $R_0$ sufficiently small the first term on the right-hand side of \eqref{j1} can be absorbed  by the term $\int_{B_{R_0}}  \rho^{-2\gamma-2+\delta} |Xv|^2$ in \eqref{e1}. Similarly, the second term on the right-hand side in \eqref{j1} can be absorbed by the term $\gamma^2 \int \rho^{-2\gamma-4+\delta}v^2 \psi $ in \eqref{e1} provided $R_0$ is sufficiently small. 

In conclusion, we obtain from \eqref{u1}  \begin{align}\label{e2}
  &\gamma^2 \int \rho^{-2\gamma-4+\delta}v^2 \psi  \leq  C 4^{\gamma} R_0^\delta R_0^{-2 \gamma-4} \int_{B_{R_0} \setminus B_{R_0/2}} \left(|Xu|^2  + u^2 \psi \right). 
\end{align}
Since $\phi \equiv 1 $ in $B_{2^{-1}R_0}$, the integral on the left-hand side of \eqref{e2} can be bounded from below as follows
\begin{equation}\label{l1}
\gamma^2 \int \rho^{-2\gamma-4+\delta}v^2 \psi  \geq \gamma^2 R_0^{-4 \gamma -8 + 2\delta} \int_{B_{R_0^2}} u^2 \psi.
\end{equation}
  Here we used the fact that for all $R_0$ small, $R_0^2 < 2^{-1}R_0$ and therefore $\phi \equiv 1$ in $B_{R_0^2}$. 
  Using this in \eqref{e2} we finally obtain
  \begin{equation}\label{con1}
  \alpha^2 4^{-\gamma} R_0^{-2\gamma - 4 +\delta} \int_{B_{R_0^2}} u^2 \psi \leq C \int_{B_{R_0} \setminus B_{2^{-1}R_0}} \left(|Xu|^2  + u^2 \psi\right).  
\end{equation}
  
By further restricting $R_0$ so that $R_0 <1/4$, we can ensure that 
$4^{-\gamma} R_0^{-\gamma} > 1$.
 Using this observation in \eqref{con1}, we finally obtain
\begin{equation}\label{con2}
\gamma^2  R_0^{-\gamma - 4 +\delta} \int_{B_{R_0^2}} u^2 \psi \leq C \int_{B_{R_0} \setminus B_{2^{-1}R_0}} \left(|Xu|^2  + u^2 \psi\right).  
\end{equation}  

By finally letting $\gamma \to \infty$ we conclude  $u \equiv 0$ in $B_{R_0^2}$. One can spread the zero set by the classical theory of weak unique continuation for uniformly elliptic operators with bounded perturbations (see for instance \cite{AKS}, \cite{GLcpam}).
  
\end{proof}

\begin{proof}[Proof of Theorem~\ref{T:La}]
The argument follows closely the proof of Theorem~\ref{main1}. The only essential difference is that, in place of the Carleman estimate in Theorem~\ref{thm2}, we employ the variable-coefficient estimate \eqref{est12} from Theorem~\ref{thm3}. For the reader's convenience, we outline the main steps.

Applying \eqref{est12} with \(\varepsilon=\delta\) to \(v = u \phi\), where \(\phi\) is the cutoff function introduced in the proof of Theorem~\ref{main1}, we obtain
\begin{align}\label{no1}
&\gamma^3 \int \rho^{-2\gamma-4+\delta} e^{2\gamma \rho^\delta} v^2 \psi
+ \gamma \int_{B_{R_0}} \rho^{-2\gamma-2+\delta} e^{2\gamma \rho^\delta}
\langle A Xv, Xv \rangle \\
&\hspace{3cm}\le C \int \rho^{-2\gamma} e^{2\gamma \rho^\delta}
(\mathscr L_\alpha v)^2 \psi^{-1}.
\nonumber
\end{align}

As in the ``constant-coefficient" case of $\Ba$, using the equation \eqref{Lau} satisfied by $u$ together with the bound on $V$, we obtain the analogue of \eqref{GGG}, namely
\begin{equation}\label{G1}
(\mathscr L_\alpha v)^2
\le C \Bigl(
\rho^{2\delta-4} v^2
+ R_0^{-4} u^2 \psi^2
+ R_0^{-4} |Xu|^2 \psi
\Bigr).
\end{equation}
A detailed derivation of \eqref{G1} can be found in \cite[p.~653]{GV}.

Substituting \eqref{G1} into \eqref{no1} yields
\begin{align}\label{no2}
&\gamma^3 \int \rho^{-2\gamma-4+\delta} e^{2\gamma \rho^\delta} v^2 \psi
+ \gamma \int \rho^{-2\gamma-2+\delta} e^{2\gamma \rho^\delta}
\langle A Xv, Xv \rangle \\
&\le C \int_{B_{R_0}} \rho^{-2\gamma-4+2\delta}
e^{2\gamma \rho^\delta} v^2 \psi^{-1}
+ C\,4^\gamma R_0^{-2\gamma-4} e^{2\gamma R_0^\delta}
\!\!\int_{B_{R_0}\setminus B_{R_0/2}}
\bigl(|Xu|^2 + u^2 \psi\bigr).
\nonumber
\end{align}

Proceeding as in \eqref{j1}, we use \eqref{ps1}, but now apply the Hardy inequality in the $z$-variable to the function
\[
f = \rho^{-\gamma-1+\delta} e^{\gamma \rho^\delta} v .
\]
Noting that
\begin{align*}
|\nabla_z (\rho^{-\gamma-1+\delta} e^{\gamma \rho^\delta})|
&\le e^{\gamma \rho^\delta} |\nabla_z \rho|
\Bigl( (\gamma+1-\delta)\rho^{-\gamma-2+\delta}
+ \gamma\delta \rho^{-\gamma-2+2\delta} \Bigr) \\
&\le C_0 \gamma\, e^{\gamma \rho^\delta}
\rho^{-\gamma-2+\delta} |\nabla_z \rho|,
\end{align*}
we obtain, by a computation analogous to that leading to \eqref{j1},
\begin{align}\label{no4}
C \int_{B_{R_0}} \rho^{-2\gamma-4+2\delta}
e^{2\gamma \rho^\delta} v^2 \psi^{-1}
&\le C_1 \int \rho^{-2\gamma-2+2\delta}
e^{2\gamma \rho^\delta} |\nabla_z v|^2 \\
&\quad + C_1 \gamma^2
\int \rho^{-2\gamma-4+2\delta}
e^{2\gamma \rho^\delta} v^2 |\nabla_z \rho|^2 .
\nonumber
\end{align}

Since
\[
|\nabla_z v|^2 \le |Xv|^2,
\qquad
|\nabla_z \rho|^2 \le |X\rho|^2 = \psi,
\]
we infer that, for \(R_0\) sufficiently small and \(\gamma\) sufficiently large, the first term on the right-hand side of \eqref{no4} can be absorbed by the gradient term on the left-hand side of \eqref{no1}, using the ellipticity condition
\(\langle A Xv, Xv \rangle \approx |Xv|^2\).
Similarly, the second term in \eqref{no4} can be absorbed by the zero-order term
\(\gamma^3 \int \rho^{-2\gamma-4+\delta} e^{2\gamma \rho^\delta} v^2 \psi\)
in \eqref{no1}.

The remainder of the proof proceeds exactly as in
\eqref{e2}-\eqref{con2}, and we omit the repetition.

\end{proof}

%%%%%%%%%%%%%%%%%%%%%%%%%%%%%%%%%%%%%%%%%%%%%%%

\end{document}